\newtheorem{theorem}{Theorem}[section]
\newtheorem{corollary}[theorem]{Corollary}
\newtheorem{remark}[theorem]{Remark}
\newtheorem{example}[theorem]{Example}
\newcommand{\N}{\mathbb{N}}
\newcommand{\C}{\mathbb{C}}
\newcommand{\B}{\mathcal{B}}
\newcommand{\op}{\operatorname{op}}
\newcommand{\w}{\operatorname{wk}}
\newcommand{\m}{\operatorname{m}}
\newcommand{\wlim}{\operatorname{wk*-lim}}
\newcommand{\si}{\sigma}
\def\S{{\mathcal S}}
\title[]{Schur null preserving maps}
\author{Ying-Fen Lin and Donal O'Cofaigh}
\subjclass[2010]{47B48, 15A86, 46L07, 47A65} \keywords{}
\address[Ying-Fen Lin, Donal O'Cofaigh]{Mathematical Sciences Research Centre, Queen's University Belfast, Belfast BT7 1NN, U.K.}
\email{y.lin@qub.ac.uk, docofaigh01@qub.ac.uk}
\begin{document}

\begin{abstract}
We provide a characterisation of Schur multiplicative maps on both finite and infinite dimensional matrix spaces, and show that every surjective Schur multiplicative contraction is automatically an isometry. We also generalise this result and provide a characterisation of Schur null preserving maps on $m\times n$ matrices and on Schur multipliers. 
\end{abstract}

\maketitle

\section{Introduction}

Let $m$ and $n$ be positive integers. The Schur (or the Hadamard) product of two $m$ by $n$ complex matrices $a, b \in M_{m,n}$ is defined as their entrywise product and is denoted by $a \ast b$. More precisely, for $a= [a_{ij}]$ and $b= [b_{ij}]$ in $M_{m,n}$, the Schur product of $a$ and $b$ is given by $a\ast b= [a_{ij} b_{ij}]$. Note that with the Schur product, the matrix space $M_{m,n}$ is a commutative (semi-simple) Banach algebra. The study of the Schur product is related to many pure and applied areas; see \cite{Horn}. Let $T: M_{m, n} \to M_{m, n}$ be a linear map. We say that $T$ is \emph{Schur null preserving} if $Ta \ast Tb = 0$ whenever $a \ast b= 0$. Similarly, we say that $T$ is \emph{Schur multiplicative} or a \emph{Schur homomorphism} if $Ta \ast Tb= T(a\ast b)$ for all $a, b \in M_{m, n}$. The same definitions can be given in $B(\ell^2)$, the space of all bounded linear operators on $\ell^2$, in which elements can be viewed as infinite matrices (indexed by $\N\times \N$). More precisely, the Hilbert space $\ell^2$ is separable and when equipped with its canonical orthonormal basis $\{e_i\}_{i= 1}^{\infty}$, an element $a$ in $B(\ell^2)$ can be viewed as a matrix $[a_{ij}]$, where $a_{ij}:= (a e_j, e_i) \in \C$ for all $i, j$. It is well-known that the Schur product is an internal operation in $B(\ell^2)$ which turns it into a semi-simple commutative Banach algebra with the usual operator norm and without unit (see e.g. \cite{Schur}). Some of its Banach algebra properties including the construction of its maximal ideal space were determined by Stout \cite{Stout} in a more general setting. 

Maps preserving certain algebraic, topological or geometric properties have been intensively studied in the literature. For instance, the classical Banach-Stone theorem characterises surjective isometries on continuous function spaces, it is well-known that such a map is a weighted homomorphism with modular one weight, moreover, it preserves the zero product of two continuous functions. Isometries and zero product (or disjointness) preserving operators have been studied in many settings, e.g. \cite{Kadison, RR, JP, FJ} for isometries, \cite{ABEV1, AEV, L} for zero product preservers and a survey \cite{Mol}, to name a few. Since the Schur product gives rise to a different algebraic structure, it is natural to seek for a characterisation of Schur multiplicative maps and Schur null preserving maps. While Schur multiplicative maps on matrices with some special properties such as $T(S) \subseteq S$ or $f(T(a))= f(a)$ for a given function $f$ on matrices were studied in \cite{LP, CLR, Poon}, we are interested in a general characterisation of Schur multiplicative maps and maps preserving Schur zero product. In particular, we obtain a complete characterisation of contractive and completely contractive Schur multiplicative maps. 

It is clear that every Schur multiplicative map preserves the Schur zero product; we will first give a general form of surjective Schur null preserving maps on matrix spaces and then obtain our characterisation of contractive Schur multiplicative maps both on $M_n$ and $B(\ell^2)$. As a corollary of our characterisation, we have that every surjective contractive Schur multiplicative map is an isometry, and every completely contractive one is a complete isometry.  We close the paper with a characterisation of bounded weak*-continuous Schur null preserving operator on the Schur multipliers of $B(\ell^2)$.

\section{Main results}

Let $m, n$ be positive integers, $J_{m,n}$ denotes the $m \times n$ matrix with all one entries and $a^t$ denotes the transpose of the matrix $a$ in $M_{m,n}$.

\begin{theorem}\label{NP on M_{m,n}}
Let $T: M_{m,n} \to M_{m,n}$ be a linear surjective map. Then $T$ is Schur null preserving if and only if $T(a)=T(J_{m,n})\ast a_{\rho}$ for all $a\in M_{m, n}$, where $\rho$ is a permutation on $\{1, 2, \ldots, m\}\times \{1, 2, \ldots, n\}$ such that $a_{\rho}:= [a_{\rho^{-1}(i, j)}]$ for $a= [a_{ij}]\in M_{m,n}$. 
\end{theorem}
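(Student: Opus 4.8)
The plan is to prove the two implications separately, with essentially all the substance lying in the forward direction. Throughout I work with the matrix units $E_{ij}$ (the matrix with a single $1$ in position $(i,j)$), which form a basis of $M_{m,n}$ and satisfy $E_{ij}\ast E_{kl}=0$ precisely when $(i,j)\neq(k,l)$.

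For the (easy) converse, suppose $T(a)=T(J_{m,n})\ast a_\rho$ and write $c=T(J_{m,n})$, so that the $(i,j)$ entry of $T(a)$ is $c_{ij}\,a_{\rho^{-1}(i,j)}$. If $a\ast b=0$ then $a_p b_p=0$ at every position $p$, and at position $(i,j)$ we get $(Ta)_{ij}(Tb)_{ij}=c_{ij}^2\,a_{\rho^{-1}(i,j)}b_{\rho^{-1}(i,j)}=0$. Hence $Ta\ast Tb=0$, so $T$ is Schur null preserving. I would record this first, as it also fixes the notation for $a_\rho$.

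For the forward direction, I first note that since $M_{m,n}$ is finite dimensional, surjectivity of $T$ forces bijectivity; in particular each $T(E_{ij})$ is nonzero. Applying the null-preserving hypothesis to $E_{ij}\ast E_{kl}=0$ for $(i,j)\neq(k,l)$ yields $T(E_{ij})\ast T(E_{kl})=0$, which says exactly that the supports $\supp(T(E_{ij}))$ are pairwise disjoint. The crux is then a counting step: there are $mn$ pairwise disjoint, nonempty sets $\supp(T(E_{ij}))$ inside an index set of size $mn$, so $\sum_{ij}\lvert\supp(T(E_{ij}))\rvert\le mn$ by disjointness and $\ge mn$ by nonemptiness; hence every support is a singleton and these singletons partition the index set. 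This produces a bijection $\sigma$ of $\{1,\dots,m\}\times\{1,\dots,n\}$ and nonzero scalars $\lambda_{ij}$ with $T(E_{ij})=\lambda_{ij}E_{\sigma(i,j)}$. I expect this pigeonhole argument, together with checking that $\sigma$ is a genuine permutation rather than merely an injection, to be the main (if modest) obstacle.

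Finally I assemble the formula by linearity. For $a=\sum_{ij}a_{ij}E_{ij}$ one has $T(a)=\sum_{ij}a_{ij}\lambda_{ij}E_{\sigma(i,j)}$, so the entry of $T(a)$ at position $p$ equals $\lambda_{\sigma^{-1}(p)}\,a_{\sigma^{-1}(p)}$. Taking $a=J_{m,n}$ shows $T(J_{m,n})_p=\lambda_{\sigma^{-1}(p)}$, whence $T(a)_p=T(J_{m,n})_p\,a_{\sigma^{-1}(p)}$. Setting $\rho=\sigma$, this is precisely the entrywise description of $T(J_{m,n})\ast a_\rho$, which completes the proof.
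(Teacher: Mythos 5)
Your proof is correct and follows essentially the same route as the paper: reduce to the matrix units, use surjectivity (hence bijectivity in finite dimensions) plus disjointness of the supports $\supp(T(E_{ij}))$ to conclude each basis element maps to a nonzero scalar multiple of a single basis element, and then assemble the formula by linearity. The only difference is cosmetic — you make explicit the counting/pigeonhole step that the paper leaves as an assertion.
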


\begin{proof}
For $(i, j)\in \{1, 2, \ldots, m\}\times \{1, 2, \ldots, n\}$, let $e_{i, j}$ be the matrix unit whose $ij$-entry is one and zero elsewhere. Note that if $T$ is Schur null preserving then for any two matrix units $e_{i,j}, \, e_{k,l}$ of $M_{m,n}$, we have that
\begin{eqnarray} \label{null product}
T(e_{i,j}) \ast T(e_{k,l})=0 \quad \text{when} \quad (i,j) \neq (k,l).
\end{eqnarray}

Given that $m,n$ are finite, for any pair $(i,j) \in \{1, \ldots, m\} \times \{1, \ldots, n\}$  the image of $e_{i,j}$ under $T$ can be expressed as a finite linear sum of basis elements in the image space. The property of $T$ being Schur null preserving is precisely the requirement that these images are disjoint - considered as the span of basis elements. 

As $T$ is surjective over a finite dimensional space it is necessarily bijective and this, coupled with equation \eqref{null product}, necessarily restricts $T$ to map each basis element to a scalar multiple of another basis element. The relationship so-defined between basis matrices in domain and image spaces yields a natural permutation $\rho:\{1, \ldots, m\} \times \{1, \ldots, n\} \to \{1, \ldots, m\} \times \{1, \ldots, n\}$ so that
\begin{align*}
T(e_{i,j}) = w_{i,j} \, e_{\rho(i,j)},
\end{align*}
where $w_{i, j}\in \C$. Let function $f$ be defined by $f(i, j):= w_{\rho^{-1}(i, j)}$ for all $(i, j) \in \{1, 2, \ldots, m\}\times \{1, 2, \ldots, n\}$.
We have
\begin{eqnarray*}
T\big(\sum_{i,j}a_{i,j}e_{i,j}\big)&=&\sum_{i,j} \, a_{i,j}T(e_{i,j}) \\
&=&\sum_{i,j} \, a_{i,j}w_{i,j}e_{\rho(i,j)} \\ 
&=&\sum_{i,j} \,w_{\rho^{-1}(i,j)}e_{i,j} \ast \sum_{i,j} \, a_{\rho^{-1}(i,j)}e_{i,j} \\ 
&=&\sum_{i, j} \, T(e_{i, j})\ast \sum_{i, j} a_{\rho^{-1}(i,j)}e_{i,j}\\
&=&T(J_{m,n})\ast [a_{\rho(i,j)}]\\
&=& f\ast a_{\rho}.
\end{eqnarray*}

To prove the converse, it is clear that if $\rho$ is a permutation on $\{1, \ldots, m\} \times \{1, \ldots, n\}$ and $T(J_{m,n})$ is a matrix with non-zero entries, the map given by $a\mapsto f\ast a_{\rho}$ preserves the Schur zero product. 
\end{proof}

Note that in case $T$ is a non-surjective Schur null preserving map on $M_{m,n}$, if the additional assumption is made that $T$ maps every basis element $e_{i,j}$ to a scalar multiple of another basis element then it is possible to retain a similar form of the characterisation. In this case, there may be more than one permutation $\rho$ which may work: each defined by choices made in the assignment of indices corresponding to basis elements in the kernel of the map. Furthermore the matrix $T(J_{m,n})$ will have at least one entry that is zero.

A Schur multiplicative map is necessarily Schur null preserving; moreover, we have $T(e_{i,j})=T(e_{i,j} \ast e_{i,j})=T(e_{i,j}) \ast T(e_{i,j})$. This means that each entry of $T(e_{i,j})$ must equal to either one or zero. Consequently, we have that $T(e_{i, j})= e_{\rho(i, j)}$ for some permutation $\rho$ on $\{1, 2, \ldots, m\} \times \{1, 2, \ldots, n\}$. Clearly, the map $a\mapsto a_{\rho}$ preserves the Schur product. 


\begin{corollary}\label{SM on M_{m,n}}
A surjective linear map $T$ on $M_{m, n}$ is Schur multiplicative if and only if $Ta= a_{\rho}$ for all $a\in M_{m, n}$, where $\rho$ is a permutation on $\{1, 2, \ldots, m\}\times \{1, 2, \ldots, n\}$. 
\end{corollary}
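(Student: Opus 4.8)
The plan is to prove Corollary \ref{SM on M_{m,n}} by leveraging Theorem \ref{NP on M_{m,n}} together with the extra rigidity that Schur multiplicativity imposes on the weights $w_{i,j}$. Since a Schur multiplicative map is automatically Schur null preserving, the surjective case of Theorem \ref{NP on M_{m,n}} already gives $T(a) = T(J_{m,n}) \ast a_\rho$ for some permutation $\rho$; the task reduces to showing that the weight matrix $T(J_{m,n})$ is exactly $J_{m,n}$, i.e.\ that every weight $w_{i,j}$ equals $1$.

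First I would exploit the idempotency of the matrix units under the Schur product. Because $e_{i,j} \ast e_{i,j} = e_{i,j}$, applying $T$ and using multiplicativity gives $T(e_{i,j}) = T(e_{i,j}) \ast T(e_{i,j})$. Writing $T(e_{i,j}) = w_{i,j} e_{\rho(i,j)}$ from the null-preserving characterisation, the right-hand side becomes $w_{i,j}^2 e_{\rho(i,j)}$, so each weight satisfies $w_{i,j}^2 = w_{i,j}$, forcing $w_{i,j} \in \{0,1\}$. Surjectivity then rules out the value $0$: if some $w_{i,j}$ were zero, then $e_{\rho(i,j)}$ could not lie in the range of $T$, contradicting that $T$ is onto (equivalently, bijective on the finite-dimensional space). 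Hence every $w_{i,j} = 1$, so $T(e_{i,j}) = e_{\rho(i,j)}$ and $T(J_{m,n}) = J_{m,n}$.

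With $T(J_{m,n}) = J_{m,n}$ in hand, the formula from Theorem \ref{NP on M_{m,n}} collapses to $T(a) = J_{m,n} \ast a_\rho = a_\rho$, which is precisely the claimed form; indeed this is already observed in the paragraph preceding the corollary statement. For the converse direction, I would verify directly that any permutation map $a \mapsto a_\rho$ is Schur multiplicative: since the Schur product acts entrywise, relabelling indices by $\rho$ commutes with entrywise multiplication, giving $(a \ast b)_\rho = a_\rho \ast b_\rho$, so $T(a \ast b) = T(a) \ast T(b)$.

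I do not expect a serious obstacle here, as the corollary is essentially a specialisation of the preceding theorem; the only point requiring genuine care is the deduction that $w_{i,j} \ne 0$, which rests squarely on surjectivity. It is worth noting that without surjectivity the conclusion can fail, since a non-surjective multiplicative map could annihilate some matrix units (sending them to $0$), and this is exactly why the hypothesis cannot be dropped.
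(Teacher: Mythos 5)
Your proposal is correct and follows essentially the same route as the paper: reduce to Theorem~\ref{NP on M_{m,n}} via the observation that Schur multiplicativity implies Schur null preservation, use the idempotency $e_{i,j}\ast e_{i,j}=e_{i,j}$ to force each weight $w_{i,j}$ into $\{0,1\}$, and invoke surjectivity (hence bijectivity in finite dimensions) to exclude $0$. Your write-up is in fact slightly more explicit than the paper's, which leaves the exclusion of zero weights implicit.
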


Note that the same result has previously been obtained by Clark, Li and Rastogi in ~\cite{CLR} with a different approach. Before moving into the infinite-dimensional matrices, we are interested to see if we could specify the permutations on the entries of the matrices with respect to our preservers. We say that a map $T$ is contractive if its operator norm $\|T\|_{\op}$ does not exceed one. We will see that with this additional assumption, we can achieve a more precise characterisation of Schur multiplicative maps. We first look at square matrices.


\begin{theorem}\label{thm: contractive SM on Mn}
Let $T$ be a linear surjective map on $M_n$. Then the following are equivalent:
\begin{enumerate}
\item \label{item:c1} the map $T$ is a Schur multiplicative contraction;
\item \label{item:c2} for all $a \in M_n$, either $T(a)= uav$ or $T(a)= ua^t v$ for some permutation unitaries $u, v$ in $M_n$.
\end{enumerate}
\end{theorem}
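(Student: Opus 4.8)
The plan is to prove the two implications separately, with essentially all of the content residing in (\ref{item:c1})$\Rightarrow$(\ref{item:c2}).

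The implication (\ref{item:c2})$\Rightarrow$(\ref{item:c1}) I would dispatch directly. If $T(a)=uav$ with $u,v$ permutation unitaries, then $T$ permutes rows and columns independently, so $T(e_{i,j})=e_{\rho(i,j)}$ with $\rho(i,j)=(\sigma(i),\tau(j))$ for permutations $\sigma,\tau$; since the Schur product is entrywise and such a $\rho$ acts separately on the two indices, an entrywise computation gives $T(a\ast b)=T(a)\ast T(b)$. Unitary invariance of the operator norm yields $\|T(a)\|_{\op}=\|a\|_{\op}$, so $T$ is (even isometric, hence) contractive. The transpose case is identical, using that $a\mapsto a^t$ is an index-swapping entrywise isometry of $(M_n,\|\cdot\|_{\op})$.

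For (\ref{item:c1})$\Rightarrow$(\ref{item:c2}), Corollary \ref{SM on M_{m,n}} already forces $T(e_{i,j})=e_{\rho(i,j)}$ for some permutation $\rho$ of $\{1,\dots,n\}\times\{1,\dots,n\}$, so the whole task is to show that contractivity forces $\rho$ to be a row permutation times a column permutation, possibly preceded by transposition. Call two distinct cells \emph{collinear} if they share a row or a column. The key computation is that for $(i,j)\neq(k,l)$ one has $\|e_{i,j}+e_{k,l}\|_{\op}=\sqrt2$ when the cells are collinear and $\|e_{i,j}+e_{k,l}\|_{\op}=1$ otherwise, while every sum of two distinct matrix units has operator norm at least $1$. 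Since $T$ is a contraction and $T(e_{i,j}+e_{k,l})=e_{\rho(i,j)}+e_{\rho(k,l)}$, non-collinear cells are sent to non-collinear cells. The main obstacle is that contractivity only delivers this in one direction; I would recover the reverse by finiteness: $\rho$ acts bijectively on the finite set of unordered pairs of distinct cells and maps the non-collinear pairs into themselves, hence (being injective on a finite set) onto themselves, so collinear pairs go exactly to collinear pairs.

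With collinearity preserved in both directions, I would classify images of lines through the clique structure of the rook's graph: a set of pairwise collinear cells lies in a single row or a single column, and the only such sets of size $n$ are the full rows and full columns, so $\rho$ carries each full row onto a full line and each full column onto a full line. A disjointness argument then pins the global alternative down: the images of the $n$ rows partition the grid into $n$ pairwise disjoint lines, but a row and a column always meet, so these images are either all rows or all columns. In the first case $\rho(i,j)=(\sigma(i),\tau(j))$, giving $T(a)=uav$ with $u,v$ the permutation unitaries of $\sigma,\tau$; in the second, the same analysis applied after transposition gives $T(a)=ua^tv$. Because $T$ is an exact entry permutation (all coefficients equal to $1$), no scalar phases survive and $u,v$ are genuine permutation unitaries. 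I expect the finiteness step upgrading one-sided to two-sided collinearity preservation, together with the rook's-graph clique classification, to be the crux; the remaining steps are routine.
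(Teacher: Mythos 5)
Your argument is correct, and it shares its analytic core with the paper's proof while finishing the combinatorics by a genuinely different route. Both proofs invoke Corollary~\ref{SM on M_{m,n}} to reduce to an entry permutation $\rho$, and both rest on the dichotomy $\|e_{i,j}+e_{k,l}\|_{\op}=\sqrt2$ or $1$ according as the two cells do or do not share a line, so that contractivity forbids $\rho$ from sending a non-collinear pair to a collinear one. From there the paper tracks the images of the diagonal units $e_{i,i}$, uses surjectivity and a pigeonhole count to force $T(e_{i,j})\in\{e_{s,q},e_{p,t}\}$, and finally excludes a mixture of the ``direct'' and ``transposed'' patterns by producing a collinear pair whose image would have norm $\sqrt2$; you instead upgrade the one-sided implication to exact preservation of collinearity via injectivity on the finite set of unordered pairs, and then classify the images of rows and columns as $n$-element cliques of the rook's graph, with the disjointness of the $n$ row-images forcing the global row/column alternative. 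Your version makes explicit and rigorous precisely the step that is tersest in the paper (the pigeonhole clause), at the cost of a small amount of extra graph-theoretic setup. The one point you should spell out is the passage from ``each row maps onto a full line'' to $\rho(i,j)=(\sigma(i),\tau(j))$: run the clique classification on the columns as well and note that if every row maps onto a row then no column can also map onto a row (its image would have to equal $\sigma(i)$ for every $i$, contradicting injectivity of $\sigma$ for $n\geq 2$), so columns map onto columns and $\rho(i,j)$ is the unique intersection of the image row and image column. With that half-sentence added, the proof is complete.
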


\begin{proof}
From (\ref{item:c2}) to (\ref{item:c1}), it is clear that $a\mapsto a^t$ preserves the Schur product and it is contractive. If $u$ and $v$ are  permutation unitaries, then $a\mapsto av$ ($a\mapsto ua$, respectively) is a map exchanging the rows (columns, respectively) of $a$ which is contractive and Schur multiplicative. 

From (\ref{item:c1}) to (\ref{item:c2}), let $\{e_{i,j}\}_{i, j= 1}^{n}$ be the canonical basis of $M_n$. Since $T$ is surjective and Schur multiplicative, from Corollary~\ref{SM on M_{m,n}} we have that $T(a)= a_{\rho}$ for some permutation $\rho$ on $\{1, 2, \ldots, n\}\times \{1, 2, \ldots, n\}$. We first claim that either $\rho(i, j)= (\pi(i), \sigma(j))$ or $\rho(i, j)= (\pi(j), \si(i))$ for some permutations $\pi, \si$ on $\{1, 2, \ldots, n\}$. Fix $i \in \{1, 2, \ldots, n\}$ and let $j\in \{1, 2, \ldots, n\}$ be arbitrary with $j\neq i$. Let $T(e_{i,i})= e_{s,t}$ and $T(e_{j,j})= e_{p,q}$ for some $s,t, p, q\in \{1, 2, \ldots, n\}$. Since $e_{i,i} \ast e_{j,j}= 0$ and $T$ is Schur multiplicative, we have that $e_{s,t} \ast e_{p,q}= 0$. Moreover, since $T$ is contractive, we have that $s \neq p$ and $t \neq q$. We claim that $T(e_{i,j})= e_{s,q}$ or $T(e_{i,j})= e_{p,t}$. Suppose otherwise, if $T(e_{i,j})= e_{k,l}$ for some $k, l \in \{1, \ldots, n\}$ such that $(k, l)\neq (s,q)$ and $(k, l)\neq (p, t)$, then it follows from the surjectivity of $T$ and the pigeonhole principle that there are $r_1, r_2$ with $r_1 \neq i, r_2\neq j$ such that  $T(e_{r_1,r_2})$ is on the $k$th row or $l$th column, which contradicts to the operator $T$ being contractive. Hence, we have $T(e_{i,j})= e_{s,q}$ or $T(e_{i,j})= e_{p,t}$. 
Let $\rho(i, i)=: (\pi(i), \si(i))$ for some functions $\pi$ and $\si$. Since $\rho$ is a permutation on $\{1, 2, \ldots, n\}\times \{1, 2, \ldots, n\}$, we have that $\pi, \si$ are permutations on $\{1, 2, \ldots, n\}$. We see that $\rho(i, j)= (\pi(i), \si(j))$ or $\rho(i, j)= (\pi(j), \si(i))$ for all $j\in \{1, 2, \ldots, n\}$.  
To complete the proof, we need to show that for any element $a= [a_{ij}]_{i,j}\in M_n$, it is either 
\begin{align*}
T([a_{ij}])= [a_{\pi(i)\si(j)}] \quad \text{or} \quad T([a_{ij}])= [a_{\si(i)\pi(j)}]^t.
\end{align*}
Take $k\in \{1, 2, \ldots, n\}$, assume that $T(e_{i,j})= e_{\pi(i)\si(j)}$, $T(e_{j,i})= e_{\pi(j)\si(i)}$ but $T(e_{k,j})= e_{\si(k)\pi(j)}^t= e_{\pi(j)\si(k)}$. We see that $\|e_{j,i}+ e_{k,j}\|= 1$ but 
\begin{align*}
\|T(e_{j,i}+ e_{k,j})\|= \|T(e_{j,i})+ T(e_{k,j})\|= \|e_{\pi(j)\si(i)}+ e_{\pi(j)\si(k)}\|> 1,
\end{align*}
a contradiction. Hence, we obtain $T(a)= uav$ in the first case, and in the second case we have $T(a)= u a^t v$, where $u$ and $v$ are the unitaries coming from the permutations $\pi$ and $\si$, respectively. 
\end{proof}

It is easy to see that the same arguments hold for rectangular matrices; therefore, a surjective linear map on $M_{m,n}$ is a Schur multiplicative  contraction if and only if $T(a)= uav$ for some permutation unitaries $u$ in $M_m$ and $v$ in $M_n$. As an immediate corollary we have the following.

\begin{corollary}\label{cor:auto isometry}
Every surjective contractive Schur multiplicative map on $M_{n}$ is an isometry.
\end{corollary}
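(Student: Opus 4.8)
The plan is to read off the statement directly from the preceding normal-form result. By Theorem~\ref{thm: contractive SM on Mn}, any surjective contractive Schur multiplicative map $T$ on $M_n$ has one of the two explicit shapes $T(a)= uav$ or $T(a)= ua^t v$, where $u$ and $v$ are permutation unitaries. It therefore suffices to verify that each of these two maps is an isometry for the operator norm, and the corollary then follows immediately.

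First I would dispose of the case $T(a)= uav$. Since $u$ and $v$ are unitary we have $\|u\|_{\op}= \|v\|_{\op}= 1$, so $\|uav\|_{\op}\le \|u\|_{\op}\|a\|_{\op}\|v\|_{\op}= \|a\|_{\op}$; applying the same submultiplicative bound to $u^* T(a)\, v^*= a$ gives the reverse inequality, and hence $\|T(a)\|_{\op}= \|a\|_{\op}$ for every $a\in M_n$. (Only the unitarity of $u$ and $v$ is used here, not that they are permutations.)

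Next I would treat the transpose case $T(a)= ua^t v$. By the previous paragraph it is enough to observe that the transposition $a\mapsto a^t$ is itself an operator-norm isometry, after which composing with the two unitary multiplications preserves the norm exactly as before. The isometry of transposition is standard: $a^t$ has the same singular values as $a$; equivalently $a^t= \overline{a^*}$, and both entrywise conjugation and passage to the adjoint preserve the operator norm, so $\|a^t\|_{\op}= \|a\|_{\op}$.

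There is essentially no obstacle here: the corollary is a direct consequence of the characterisation in Theorem~\ref{thm: contractive SM on Mn} together with the unitary invariance of the operator norm and the isometric nature of transposition. The only point meriting any care is the identity $\|a^t\|_{\op}= \|a\|_{\op}$, which I would justify via singular values (or the conjugate-adjoint identity $a^t= \overline{a^*}$) rather than by a direct computation.
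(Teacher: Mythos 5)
Your proposal is correct and is exactly the argument the paper intends: the corollary is stated as an immediate consequence of Theorem~\ref{thm: contractive SM on Mn}, whose normal forms $T(a)=uav$ and $T(a)=ua^tv$ are isometric by unitary invariance of the operator norm and the fact that transposition preserves singular values. The paper leaves these verifications implicit, and your write-up simply supplies them.
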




In general we are interested in the characterisations of Schur null preserving maps defined on an infinite dimensional space. As we see, most of the arguments above hold for contractive Schur multiplicative maps on $B(\ell^2)$, however, in order to make sure that the maps $\pi$ and $\si$ defined as above are permutations on $\N$, we need a stronger assumption that $T$ being bijective. It is straightforward to have the following result. 

\begin{theorem}\label{thm: contractive bij SM on infinite-dim}
Let $T$ be a linear bijection on  $B(\ell^2)$. Then $T$ is contractive and Schur multiplicative if and only if there are permutation unitaries $u, v$ in $B(\ell^2)$ such that either $T(a)= uav$ or $T(a)= ua^t v$ for all $a\in B(\ell^2)$.
\end{theorem}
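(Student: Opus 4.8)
The plan is to follow the proof of Theorem~\ref{thm: contractive SM on Mn} verbatim at the level of the underlying permutation $\rho$ of $\N\times\N$, and to replace the one finite ingredient that genuinely fails in infinite dimensions --- the pigeonhole/counting step, where in $M_n$ a row whose preimage sits inside a row must, by equal cardinality, \emph{be} that whole row --- by an argument that uses surjectivity to manufacture full rows and columns in the range. For the easy direction I would check, exactly as in Theorem~\ref{thm: contractive SM on Mn}, that $a\mapsto a^t$ and the conjugations $a\mapsto uav$ by permutation unitaries are Schur multiplicative, bijective and isometric.

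For the forward direction, the first task is to recover $\rho$ without finiteness. Since $T$ is a Schur multiplicative bijection, so is $T^{-1}$: from $T(a\ast b)=Ta\ast Tb$, writing $x=Ta,\ y=Tb$ gives $T^{-1}(x\ast y)=T^{-1}x\ast T^{-1}y$. Both maps then send Schur idempotents to Schur idempotents and preserve the order $p\le q\iff p\ast q=p$. The nonzero Schur idempotents of $B(\ell^2)$ are the bounded $0/1$ matrices and the minimal ones are exactly the matrix units $e_{i,j}$, so $T$ restricts to an order isomorphism of this lattice carrying minimal idempotents to minimal idempotents; hence $T(e_{i,j})=e_{\rho(i,j)}$ for a bijection $\rho$ of $\N\times\N$. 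This replaces Corollary~\ref{SM on M_{m,n}}, and it is here that bijectivity (not mere surjectivity) enters. Schur-multiplying a general $a$ by a matrix unit, $T(a)\ast T(e_{i,j})=T(a\ast e_{i,j})=a_{i,j}T(e_{i,j})$, then forces $(Ta)_{\rho(i,j)}=a_{i,j}$, i.e. $T(a)=a_{\rho}$ on all of $B(\ell^2)$, with no appeal to continuity.

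Next I would extract the geometric content of contractivity. Writing $R_a=\{a\}\times\N$ and $C_b=\N\times\{b\}$ for the rows and columns, call two cells \emph{independent} if they share neither a row nor a column. For any finite independent set $S$ one has $\|\sum_{(p,q)\in S}e_{p,q}\|=1$, so $\|\sum_{(p,q)\in S}e_{\rho(p,q)}\|\le 1$ forces $\rho(S)$ to be independent; thus $\rho$ preserves independence. Equivalently, for every $R_k$ or $C_l$ in the range, $\rho^{-1}(R_k)$ and $\rho^{-1}(C_l)$ are cliques (pairwise dependent), hence each is contained in a single domain line. The crux, and the step I expect to be the main obstacle, is the following infinite surrogate for the finite counting argument.

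\textbf{Key Lemma.} No domain line is mapped by $\rho$ onto a set containing both a full row and a full column. Indeed, suppose $\rho(R_m)\supseteq R_a\cup C_b$. By surjectivity every cell has a preimage, and for $r\neq a,\ y\neq b$ I would show $(r,y)\in\rho(R_m)$: if its preimage lay off $R_m$ it would be independent from all of $R_m$ save in one column, yet $(r,y)$ is dependent on the cell of $R_a$ in column $y$ and on the cell of $C_b$ in row $r$, both of which lie in $\rho(R_m)$; independence preservation pins each of these to the \emph{same} excluded cell of $R_m$, forcing $(a,y)=(r,b)$, a contradiction. Hence $\rho(R_m)=\N\times\N$, impossible since $\rho$ is injective and $R_m\neq\N\times\N$. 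Both surjectivity (to fill whole lines) and injectivity (to forbid one line covering everything) are indispensable, which is precisely why the theorem requires $T$ bijective. Granting the lemma, the remainder is rook-graph bookkeeping: the preimage of each range line is a clique inside a domain line, and the Key Lemma rules out two range rows, or a range row and a range column, pulling back into a common domain line, and rules out any domain line pulling back two crossing range lines; tracing cells through $\rho$ and reinvoking the lemma whenever a cross threatens to appear shows that $\rho$ carries each domain line \emph{onto} a single range line. Since a row and a column always meet while $\rho$ sends disjoint lines to disjoint images, the rows cannot map partly to rows and partly to columns; so either $\rho(i,j)=(\pi(i),\si(j))$ or $\rho(i,j)=(\pi(j),\si(i))$ for injections $\pi,\si$ of $\N$. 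Finally, surjectivity of $\rho$ together with the product form forces $\pi$ and $\si$ to be onto, hence permutations; letting $u,v$ be the associated permutation unitaries gives $T(a)=uav$ in the first case and $T(a)=ua^tv$ in the second.
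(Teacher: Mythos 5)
Your proposal is correct in substance, but it is a genuinely different (and far more detailed) route than the paper's, which offers no argument at all for this theorem beyond the remark that "most of the arguments above hold" and that bijectivity is needed to make $\pi,\si$ permutations --- i.e.\ the paper asks the reader to transport the proof of Theorem~\ref{thm: contractive SM on Mn}, whose two finite-dimensional crutches (Corollary~\ref{SM on M_{m,n}}, proved via "surjective implies bijective" on a finite-dimensional space, and the pigeonhole step) do not literally transfer. You replace the first by an order-theoretic argument on the lattice of Schur idempotents: since $T$ and $T^{-1}$ are both Schur homomorphisms they preserve the order $p\le q\iff p\ast q=p$, minimal nonzero idempotents are exactly the matrix units, so $T(e_{i,j})=e_{\rho(i,j)}$ for a bijection $\rho$, and the identity $T(a)\ast T(e_{i,j})=a_{i,j}T(e_{i,j})$ pins down every entry of $T(a)$ with no continuity assumption --- this is clean and correct. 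You replace the second by the independence-preservation observation plus your Key Lemma, whose proof (a cell off the line $R_m$ is dependent on at most one cell of $R_m$, so two distinct dependencies force a collision, contradicting injectivity) is correct and is exactly the right infinite surrogate for counting. What your approach buys is an actual proof; what the paper's buys is brevity at the cost of a real gap.

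One step you should expand: the Key Lemma as stated only forbids a domain line from pulling back a full range \emph{row and} a full range \emph{column}; it does not by itself rule out two \emph{parallel} range lines (say $R_{a_1}$ and $R_{a_2}$) pulling back into one domain line $M$, which you need for the map "range line $\mapsto$ containing domain line" to be injective. The fix is a second instance of the same argument: if $\rho(M)\supseteq R_{a_1}\cup R_{a_2}$, then for any $(r,y)$ with $r\notin\{a_1,a_2\}$ its preimage is dependent on both $\rho^{-1}(a_1,y)$ and $\rho^{-1}(a_2,y)$ in $M$, and a cell off $M$ is dependent on at most one cell of $M$, so again $\rho(M)=\N\times\N$, impossible. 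With that in hand, the two families of domain lines carrying the preimages of range rows and of range columns are disjoint (by the Key Lemma) and each covers $\N\times\N$, hence one is exactly the set of all domain rows and the other all domain columns, which yields the product or transposed-product form and, via surjectivity, the permutations $\pi,\si$. Your phrase "reinvoking the lemma whenever a cross threatens to appear" gestures at this but should be made explicit.
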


It is well-know that the transpose is not a completely bounded map (\cite{Paulsen}), we have the following characterisation of completely contractive, Schur multiplicative bijections on $B(\ell^2)$. 

\begin{corollary}\label{thm: c-contractive bij SM on infinite-dim}
Let $T$ be a linear bijection on $B(\ell^2)$. Then $T$ is completely contractive and Schur multiplicative if and only if $T(a)= uav$ for some permutation unitaries $u, v$ in $B(\ell^2)$. In particular, every completely contractive Schur multiplicative bijection on $B(\ell^2)$ is a complete isometry.
\end{corollary}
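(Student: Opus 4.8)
The plan is to deduce this directly from Theorem~\ref{thm: contractive bij SM on infinite-dim} by eliminating the transpose branch, using that the transpose fails to be completely bounded on $B(\ell^2)$.

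For the converse direction, suppose $T(a) = uav$ for permutation unitaries $u, v \in B(\ell^2)$. Writing $u, v$ as the unitaries induced by permutations $\pi, \si$ of $\N$, the map $a \mapsto uav$ merely permutes the rows and columns of $a$ and so has the form $a \mapsto a_{\rho}$ with $\rho(i,j) = (\pi(i), \si(j))$; as observed in the discussion following Theorem~\ref{NP on M_{m,n}}, such a coordinate permutation preserves the Schur product, so $T$ is Schur multiplicative. To see that $T$ is a complete isometry, I would note that for each $k$ the amplification $\id_k \otimes T$ acts on $x \in M_k(B(\ell^2))$ by $x \mapsto (1_k \otimes u)\, x\, (1_k \otimes v)$; since $1_k \otimes u$ and $1_k \otimes v$ are unitaries, this preserves the norm, whence $\|T\|_{\cb} = 1$ and in particular $T$ is completely contractive.

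For the forward direction, suppose $T$ is a completely contractive Schur multiplicative bijection. Then $T$ is in particular a contractive Schur multiplicative bijection, so Theorem~\ref{thm: contractive bij SM on infinite-dim} provides permutation unitaries $u, v$ with either $T(a) = uav$ or $T(a) = ua^t v$. The crux is to rule out the second case. The key point is that pre- and post-composing with the complete isometries $a \mapsto av$ and $a \mapsto ua$ leaves the completely bounded norm unchanged; thus if $T(a) = ua^t v$, then $\|T\|_{\cb}$ equals the cb norm of the transpose on $B(\ell^2)$. Since the transpose is not completely bounded (\cite{Paulsen}) --- indeed its restriction to the upper-left $k\times k$ corner $M_k \subseteq B(\ell^2)$ already has cb norm $k$ --- this forces $\|T\|_{\cb} = \infty$, contradicting complete contractivity. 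Hence $T(a) = uav$. The ``in particular'' assertion is then immediate, since every map of this form was shown above to be a complete isometry.

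The main obstacle is precisely the elimination of the transpose branch, resting on the standard but essential fact that the transpose fails to be completely bounded on $B(\ell^2)$; the surrounding bookkeeping --- that left and right multiplication by permutation unitaries are complete isometries and hence do not disturb the cb norm --- is routine.
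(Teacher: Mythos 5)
Your proposal is correct and follows essentially the same route as the paper, which likewise derives the corollary from Theorem~\ref{thm: contractive bij SM on infinite-dim} and discards the transpose branch on the grounds that the transpose is not completely bounded (the paper states this only as a one-line remark before the corollary, whereas you spell out the cb-norm bookkeeping). Nothing further is needed.
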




The following example indicates the need of the assumptions in Theorem~\ref{thm: contractive bij SM on infinite-dim}. 

\begin{example}
Let $T$ be an operator on $B(\ell^2)$ defined by
\[T([a_{ij}])=\begin{bmatrix}
a_{1 1} &a_{1 3} & a_{1 4} &a_{1 5}&\dots\\
a_{3 1}&a_{3 3}&a_{3 4}&a_{3 5}&\dots\\
a_{4 1}&a_{4 3}&a_{4 4}&a_{4 5}&\dots\\
a_{5 1}&a_{5 3}&a_{5 4}&a_{5 5}&\dots\\
\vdots& \vdots & \vdots & \vdots &  \ddots
\end{bmatrix}\] 

It is easy to check that $T$ is a contractive, surjective Schur multiplicative map which is not injective and clearly the operator $T$ does not carry the standard form. 
\end{example}





\section{On Schur multipliers}

It is natural to ask for an infinite dimensional version of Theorem~\ref{NP on M_{m,n}}. In this context, the matrix space $M_{m,n}$ is not replaced by $B(\ell^2)$ but by a space on which the Schur product behaves better in the infinite dimensional setting, namely, by the space $\S(\B)$ of all Schur multipliers of $B(\ell^2)$.


Let $\B= B(\ell^2)$, note that the Schur product is well-defined on $B(\ell^2)$ \cite{Paulsen}. By a \emph{Schur multiplier} of $B(\ell^2)$, we mean a map $m_{\psi}: \B \to \B$ of the form $a\mapsto \psi\ast a$, where $\psi= (\psi(i, j))_{i, j\in \N}$ is a fixed element. It follows that $\S(\B)$ is contained in $\ell^{\infty} (\N\times \N)$. Indeed, if $\{e_{i, j}\}_{i, j\in \N}$ is the canonical matrix units in $B(\ell^2)$ then we have that $|\psi(i, j)|= \|m_{\psi}(e_{i, j})\|_{\op}< \infty$ for all $i, j\in \N$. It is trivial to verify that $\S(\B)$ is a subalgebra of $\ell^{\infty} (\N\times \N)$ when the latter is equipped with the usual pointwise operations, and $\S(\B)$ is a dual Banach space. Moreover, by a result of R. R. Smith \cite{Smith}, each of these Schur multipliers has a completely bounded norm equal to its norm as linear map on $B(\ell^2)$, and it can be shown that $\S(\B)$ is a semi-simple commutative Banach algebra when equipped with the norm $\|\psi\|_{\m}:= \|m_{\psi}\|_{\op}$. On the other hand, it can be shown that $B(\ell^2)$ equipped with the multiplier norm $\|\cdot\|_{\m}$ is a (commutative) semi-simple Banach subalgebra of $\S(\B)$. We hence have the chain:
\begin{align*}
B(\ell^2) \subseteq \S(\B) \subseteq \ell^{\infty}(\N\times \N),
\end{align*}
where both inclusions are strict. For the first inclusion: Take the constant function $1$ having the value one on all $\N\times \N$. Clearly, $1$ is a Schur multiplier (in fact, $m_1$ is the identity transformation) but does not belong to $B(\ell^2)$. An example of a function which belongs to $\ell^{\infty}(\N \times \N)$ but not in $\S(\B)$ is the characteristic function $\chi_{\Delta}$ of the set $\Delta= \{(i, j): j\leq i\}$, see for example \cite{Davidson}.

 
  
\begin{theorem}\label{thm:SchurnullS}
Let $T: \S(\B) \to \S(\B)$ be a surjective, bounded, weak*-continuous linear operator. Then $T$ is a Schur null preserving map if and only if 
$T(a)= f \ast (a \circ {\rho})$, where $\rho: \N\times \N \to \N\times \N$ is a permutation and $f \in \S(\B)$. 
\end{theorem}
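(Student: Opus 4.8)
The plan is to mirror the finite-dimensional argument of Theorem~\ref{NP on M_{m,n}}, replacing the pigeonhole/bijectivity step by the combination of surjectivity and weak*-continuity, and to handle the passage from matrix units to general multipliers by a weak*-density argument. The converse is the routine direction: for a permutation $\rho$ of $\N\times\N$ and $f\in\S(\B)$, if $a\ast b=0$ pointwise then $(a\circ\rho)\ast(b\circ\rho)=(a\ast b)\circ\rho=0$, whence $T(a)\ast T(b)=f\ast f\ast\big((a\ast b)\circ\rho\big)=0$; here I would only need to record that $a\mapsto f\ast(a\circ\rho)$ is a well-defined, bounded, weak*-continuous self-map of $\S(\B)$.

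For the forward direction I would first restrict $T$ to the canonical matrix units $e_{i,j}\in\S(\B)$, the characteristic functions of singletons, which are genuine Schur multipliers. Since $e_{i,j}\ast e_{k,l}=0$ for $(i,j)\neq(k,l)$ and $T$ is Schur null preserving, the supports $S_{i,j}:=\supp T(e_{i,j})\subseteq\N\times\N$ are pairwise disjoint. The heart of the proof is to upgrade \emph{pairwise disjoint} to \emph{each $S_{i,j}$ is a single point}, after which I may write $T(e_{i,j})=w_{i,j}\,e_{\sigma(i,j)}$ for some weights $w_{i,j}\in\C$ and read off the index map $\sigma$ exactly as in the finite case.

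The main obstacle is obtaining these singleton supports and forcing $\sigma$ to be onto, which is precisely where the finite argument invoked counting. To do this I would exploit weak*-continuity together with surjectivity. Evaluation at a coordinate $(p,q)$ is a weak*-continuous functional on $\S(\B)$, being the pairing with the simple tensor $e_p\otimes e_q$ in the predual, so each composite $a\mapsto T(a)(p,q)$ is weak*-continuous; moreover the finitely supported elements are weak*-dense in $\S(\B)$. This lets me transfer surjectivity of $T$ into a covering and separation statement for the sets $S_{i,j}$: a single $T(e_{i,j})$ spreading over more than one coordinate, or a coordinate missed by every $S_{i,j}$, should obstruct surjectivity. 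Making this rigorous in the dual-space setting, rather than by a dimension count, is the delicate step and is where the weak*-topology must carry the argument; it is also the step that feels most at risk, since surjectivity alone (without injectivity) is what typically permits index-forgetting phenomena.

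Finally, with $T(e_{i,j})=w_{i,j}\,e_{\sigma(i,j)}$ and $\sigma$ a bijection of $\N\times\N$ in hand, I would set $\rho:=\sigma^{-1}$ and $f:=T(1)$, where $1$ is the all-ones multiplier (the identity of $\S(\B)$); since $1\in\S(\B)$ and $T$ maps into $\S(\B)$ we get $f\in\S(\B)$, and a direct computation on matrix units gives $f=[\,w_{\rho(p,q)}\,]$ together with $T(e_{i,j})=f\ast(e_{i,j}\circ\rho)$. Both $T$ and $a\mapsto f\ast(a\circ\rho)$ are weak*-continuous and agree on the weak*-dense set of finitely supported multipliers, so they agree on all of $\S(\B)$, yielding $T(a)=f\ast(a\circ\rho)$ as claimed.
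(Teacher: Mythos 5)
Your overall strategy coincides with the paper's: both proofs reduce to the matrix units, extract weights $w_{i,j}$ and an index bijection, identify $f$ with the image of the all-ones multiplier, and then pass from the finitely supported elements to all of $\S(\B)$ by weak*-continuity (the paper does this by writing $T(a)=\wlim T(J_n\ast a)$ and computing entrywise; your ``two weak*-continuous maps agreeing on a weak*-dense set'' is the same argument). The converse direction is handled identically.

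The one place you stop short is exactly the step you flag as ``most at risk'': upgrading pairwise disjointness of the supports $S_{i,j}=\supp T(e_{i,j})$ to the statement that each $S_{i,j}$ is a singleton and that the $S_{i,j}$ cover $\N\times\N$. This is not a step that would fail, and you have already assembled the ingredients needed to close it; you just need to run them. By weak*-continuity of $T$ and of the point evaluations, together with weak*-density of the finitely supported multipliers, $T(a)(p,q)=\sum_{i,j}a_{i,j}\,T(e_{i,j})(p,q)$ for every $a$ and every coordinate $(p,q)$, and by disjointness at most one term of this sum is nonzero. Hence if some $S_{i,j}$ contained two points $(p,q)\neq(p',q')$, every element $b$ in the range of $T$ would satisfy the fixed linear relation $T(e_{i,j})(p',q')\,b(p,q)=T(e_{i,j})(p,q)\,b(p',q')$, so the range would lie in a proper weak*-closed subspace of $\S(\B)$, contradicting surjectivity; and if some $(p,q)$ lay in no $S_{i,j}$, every element of the range would vanish at $(p,q)$, again contradicting surjectivity since $e_{p,q}\in\S(\B)$. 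Injectivity of your $\sigma$ is immediate from disjointness. With that paragraph supplied your proof is complete. It is worth noting that the paper finesses this very point: it obtains the singleton supports by invoking Theorem~\ref{NP on M_{m,n}} on the corners $J_n\ast\S(\B)$, even though the restriction of $T$ to such a corner is not a surjection of $M_n$ onto itself, so strictly speaking the published argument also needs the supplementary surjectivity argument you gesture at.
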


\begin{proof}
Take $n\in \N$, let $J_n:= \sum_{i, j= 1}^{n} e_{i, j}\in \S(\B)$; this is a projection onto the $n$ by $n$ matrices. Let $J$ be the identity with respect to the Schur product. By Theorem~\ref{NP on M_{m,n}}, we have that
\begin{align*}
T(J_n)= \sum_{i, j= 1}^{n} w_{i, j} e_{\rho(i, j)},
\end{align*}
where $w_{i, j}\in \C$ and $\rho: \{1, 2, \ldots, n\}\times \{1, 2, \ldots, n\} \to R_n\subseteq \N\times \N$ is a bijection. Let the function $f_n:= T(J_n)$ be given by
\begin{equation}
  f_n(i, j)= \begin{cases}
            w_{\rho^{-1}(i, j)} &\mbox{if } (i, j)\in R_n, \\
            0 &\text{otherwise}.
        \end{cases}
\end{equation}
Let $f: \N\times \N \to \C$ be defined by $f(i, j):= w_{\rho^{-1}(i, j)}$ for all $(i, j)\in \N\times \N$. We have that $f_n \to f$ pointwise as $n \to \infty$. On the other hand,  $J_n \xrightarrow[]{\w^*} J$ as $n\to \infty$, i.e. $J_n$ weak*-converges to $J$ and the operator $T$ is weak*-continuous, it follows that $f_n= T(J_n) \xrightarrow[]{\w^*} T(J)$. Moreover, from the assumption $T(J)$ is in $\S(\B)$, we have that $f= T(J)\in \S(\B)$. 
Now we want to claim that $T(a)= f\ast a_{\rho}$ for all $a\in \S(\B)$. To show this, let $a \in \S(\B)$, we have that $J_n \ast a \xrightarrow[]{\w^*} a$ as $n\to \infty$. From the weak*-continuity of $T$, we have that
\begin{eqnarray*}
T(a) &=& \wlim T(J_n\ast a)= \wlim \sum_{i, j= 1}^{n} a_{i, j} w_{i, j} e_{\rho(i, j)} \\
 &=& \wlim \sum_{(k,l)\in R_n} w_{\rho^{-1}(k, l)} a_{\rho^{-1}(k, l)} e_{k, l} \\
 &=:& \wlim~ (f_n \ast a^{(n)}_{\rho}),
\end{eqnarray*}
where $a^{(n)}_{\rho}:= J_n\ast a_{\rho}$, i.e., the projection of $a_{\rho}$ onto the $n\times n$ matrices.
On the other hand, we have that $(f_n\ast a^{(n)}_{\rho}) (i, j) \to (f\ast a_{\rho})(i, j)$ as $n\to \infty$ for all $(i, j)\in \N\times \N$. Hence, we have $f\ast a_{\rho}= T(a)\in \S(\B)$.

It is straightforward to show the operator $T$ with the form $T(a)= f\ast (a \circ {\rho})$ is Schur null preserving.
\end{proof}

Note that the function $f$ in Theorem~\ref{thm:SchurnullS} is bounded away from zero. 

\begin{remark}
It is worth noting that in general the operator $a \mapsto a_{\rho}$ defined on an infinite dimensional space may not be bounded, however, if we assume furthermore that $T(J)$ in Theorem~\ref{thm:SchurnullS} is invertible, then $a\mapsto a_{\rho}$ is bounded on $\S(\B)$.
\end{remark}

\end{document}